\font\cyr=wncyr10 scaled \magstep1%
\def\Sh{\text{\cyr Sh}}
 \newcommand{\xrightarrowdbl}[2][]{%
   \xrightarrow[#1]{#2}\mathrel{\mkern-14mu}\rightarrow
}
\newtheorem{theorem}{Theorem}[section]
\newtheorem{prop}[theorem]{Proposition}
\newtheorem{lem}[theorem]{Lemma}
\newtheorem{cor}[theorem]{Corollary}
\theoremstyle{definition}
\newtheorem{remark}[theorem]{Remark}
\newtheorem{example}[theorem]{Example}
\numberwithin{equation}{section}
\newtheorem*{maintheorem*}{Main Theorem}
\theoremstyle{definition}
\newtheorem{definition}{Definition}
\newcommand{\CO}{\mathcal{O}}
\newcommand{\hCO}{\hat{\mathcal{O}}}
\newcommand{\SO}{\un{\mathrm{SO}}_q}
\newcommand{\un}{\underline}
\newcommand{\B}{\mathcal{B}}
\newcommand{\wt}{\widetilde}
\newcommand{\mk}{\medskip}
\renewcommand{\sectionmark}[1]{}
\renewcommand{\Im}{\operatorname{Im}}
\newcommand{\la}{\langle}
\newcommand{\ra}{\rangle}
\newcommand{\N}{\un{\mathrm{N}}}
\newcommand{\diag}{\text{diag}}
\newcommand{\af}{\text{af}}
\newcommand{\iy}{\infty}
\newcommand{\bk}{\bigskip}
\newcommand{\fc}{\frac}
\newcommand{\g}{\gamma}
\newcommand{\s}{\sigma}
\newcommand{\Pic}{\text{Pic}}
\newcommand{\dl}{\delta}
\newcommand{\Dl}{\Delta}
\newcommand{\lm}{\lambda}
\newcommand{\om}{\omega}
\newcommand{\Om}{\Omega}
\newcommand{\ov}{\overline}
\newcommand{\vp}{\varphi}
\newcommand{\BG}{\mathbb{G}}
\newcommand{\BF}{\mathbb{F}}
\newcommand{\C}{\mathcal{C}}
\newcommand{\Q}{\mathbb{Q}}
\newcommand{\Z}{\mathbb{Z}}
\renewcommand{\a}{\alpha}
\newcommand{\cl}{\mathfrak{cl}}
\newcommand{\op}{\text{op}}
\newcommand{\et}{\text{\'et}}
\newcommand{\p}{\varphi}
\newcommand{\A}{\mathbb{A}}
\newcommand{\mA}{\mathcal{A}}
\newcommand{\Nr}{\text{Nr}}
\newcommand{\fp}{\mathfrak{p}}
\newcommand{\fP}{\mathfrak{P}}
\newcommand{\Iso}{\text{Iso}}
\newcommand{\Oq}{\un{\mathrm{O}}_q}
\newcommand{\Sp}{\text{Spec} \,}
\newcommand{\disc}{\text{disc}}
\begin{document}

\title{The Geometric Gauss-Dedekind} 
\date{\vspace{-5ex}}
\author{Rony A.~Bitan}



\maketitle

\sloppy
\begin{abstract}
\noindent Gauss and Dedekind have shown a bijection 
between the set of 
$\mathrm{SL}_2(\mathbb{Z})$-equivalence classes 
of positive binary quadratic $\Z$-forms 
of the discriminant of an imaginary quadratic 
field and the class group of its ring of integers. 
Using \'etale cohomology we show an analogue 
of this correspondence in the positive characteristic. 
This leads to the description of the 
set of genera and to another result 
analogous to Gauss' one by which 
any form composed with it belongs 
to the principal genus.  
\end{abstract}
\fussy

\bigskip

\section{Introduction} \label{intro}
Let $R$ be a unital commutative domain, 
$M$ an $R$-module of rank $2$ and $N$ 
an invertible $R$-module. 
A map $q:M \to N$ is called a \emph{binary quadratic map}  
if $q(rx)= r^2 q(x)$ for all $x \in M, r \in R$,  
and the induced symmetric map  
$$ B_q : M \times M \to N: \ (x,y) \mapsto q(x+y)-q(x)-q(y)$$ 
is $R$-bilinear (\cite[\S 6]{Kne}). 
In particular if $N=R$, this $q$ 
is the familiar binary \emph{quadratic form} 
(\cite[\S 5.3.5]{Knus}). 
The quadratic module $(M,q)$ is called 
\emph{primitive} if $Rq(M) = N$.  
In that case, fixing an $R$-basis $\{e_1,e_2\}$ 
of $M$, there exist co-prime elements 
$a,b,c \in R$ such that: 
$$\tilde{q}(X,Y) := q(Xe_1+Ye_2) = aX^2 + bXY + cY^2. $$  
For brevity we denote $q=(a,b,c)$. 
Throughout, all quadratic maps will be assumed primitive. 
  
\mk 

Let $A$ be a quadratic $R$-algebra. 
It admits a standard involution $\s$,   
thus a norm map $n_A:A \to R: x \mapsto x\s(x)$.   
Given an invertible $A$-module $M$  
equipped with a quadratic map $q:M \to N$,   
we say that $(M,q)$ is \emph{of type~$A$}, 
if $q$ is compatible with $n_A$, 
namely, if it satisfies $q(xa)=q(x)n_A(a)$ 
for all $x \in M, a \in A$ (cf. \cite[III, \S 7.2]{Knus}). 
M.~Kneser showed in (\cite[Prop. 2]{Kne}), that 
given an invertible $A$-module $M$, there exists 
a unique pair $(N,q)$ up to isomorphism, 
such that $(M,q)$ is primitive of type $A$. 

\mk 

A change of variables by $H \in \mathrm{GL}(M)$,  
such that the quadratic map 
\mbox{$q'=q \circ H$} is still defined over $R$, 
is called an \emph{isometry} of $q$. 
It is \emph{proper} if $\det(H)=1$ and \emph{improper} otherwise. 
If $H$ is proper and defined over an $R$-algebra $S$, 
then $q'$ is said to be \emph{equivalent modulo $\mathrm{SL}_2(S)$} to $q$. 
The \emph{discriminant} of $q$, up to $R$-equivalence, 
is defined as the coset (\cite[\S 2.1]{Con2})
$$ 
\disc(q) := \det(B_q) \cdot (R^\times)^2 \in R/(R^\times)^2,
$$ 
where $\det(B_q)$ stands for $\det(B_q(e_i,e_j))$.  

\begin{definition}  \label{cl1} 
Given $\Delta \in R/(R^\times)^2$ we set: 
\begin{align*}
\cl_0(\Dl) &:= \{\text{primitive quadratic forms} \ q:\disc(q) = \Dl\} / \mathrm{SL}_2(R) \\ \nonumber \subseteq 
\cl_1(\Dl) &:= \{\text{primitive quadratic maps} \ q:\disc(q) = \Dl\} / \mathrm{SL}_2(R).    
\end{align*}   
If $\Pic(R)=1$ then $\cl_0(\Dl) = \cl_1(\Dl)$. 
A coarser classification, up to also improper isometries, is:  
$$ 
\cl(\Dl) := \{ q:\disc(q) = \Dl\} / \mathrm{GL}_2(R). 
$$   
Over a domain of positive characteristic there 
is no notion of a definite quadratic form (positive or negative). 
We may generalize this concept by defining 
\emph{definite forms} as the representatives in 
$$ \cl_1'(\Dl) := \cl_1(\Dl) / [q] \sim [\lm q]\ : \  \lm \in R^\times - (R^\times)^2.  $$
Over $R=\Z$ for which $(R^\times)^2=1$,   
a discriminant is an integer $d$ and 
$$ 
[q] \sim [\lm q]\ : \  \lm \in R^\times - (R^\times)^2 
$$
gives $[q] \sim [-q]$, which means -- when $d<0$ -- 
that any positive definite form is identified with its negative, thus $\cl_1'(d)$ 
classifies the positive ones. 
\end{definition}

Gauss, in his famous 
{\emph{Disquisitiones Arithmeticae}}~\cite{Gau}, 
defined the composition law of two binary 
quadratic $R$-forms. 
Later, R.\ Dedekind identified the obtained 
group structure with the one of an abelian group; 
given an imaginary quadratic field $K=\Q(\sqrt{d})$, 
$d < 0$ is squarefree,   
with ring of integers $\CO_K$ and 
discriminant $\Dl_K$ there is a bijection 
of pointed sets: 
$$ 
\cl_1'(\Dl_K) \xrightarrow{\sim} \mathrm{Pic}
(\CO_K): \ \left[ (a,b,c) \right] \mapsto  
\left [ \left\la a,\fc{b - \sqrt{\Dl_K}}{2} \right \ra \right],  
$$  
where 
$\Pic(\CO_K)$ is the Picard group of $\CO_K$ 
(\cite[Thm.~58]{FT}). 

\mk 

M.\ Kneser showed in \cite[p.412]{Kne}, 1982, that  quadratic modules of type~$A$, 
where $A$ is a quadratic $R$-algebra, are classified by $\Pic(A)$.   
In 1991, M.\ A.\ Knus in \cite[IV \S 5]{Knus},    
used \'etale cohomology in the nondegenerate case 
to classify regular quadratic $R$-forms with trivial Arf invariant. 
M.\ M.\ Wood has proved in \cite{Wood}, 2011,  
using actions on the symmetric space of a quadratic module, 
a similar correspondence over $R$, but without restricting to positive definite forms 
(which are not defined in the positive characteristic). 

\mk 

In this paper, assuming $R$ is 
a domain in which $2$ is invertible, 
due to a recent result of \cite{APS} regarding the 
smoothness of the special orthogonal group 
$\SO$ of a quadratic form $q$ with squarefree 
discriminant, thus being primitive (though maybe degenerate), 
we generalize the cohomological approach to such 
quadratic forms and maps. 

\mk 

In Section \ref{torsors} we form a bijection of pointed sets, 
having a structure of an abelian group: $\cl_0(\disc(q)) \cong H^1_\et(R,\SO)$.   
The above new definition of definite forms  
leads us in Section~\ref{Gauss section} to formulate the following correspondence, 
which is a general constructive analogue of the one of Dedekind, 
in the geometric case (Theorem~\ref{Gauss} below): 

\begin{theorem}  
Given a finite field $\BF$ of odd characteristic, 
let $k/\BF(x)$ be an imaginary extension and $\CO$ the domain of $k$-elements regular
everywhere away of the prime $\iy_k$ lying over $\iy=\la 1/x \ra$.    
Let $K/k$ be a geometric imaginary quadratic extension and $\CO_K$ 
the domain of $K$-elements regular everywhere away of the prime $\iy_K|\iy_k$, 
with discriminant $\Dl_K = -\a (\BF^\times)^2$, $\a \in \CO$. 
There is an isomorphism of abelian groups:    
$$ \wt{i}_*:\cl_1'(\Dl_K) \xrightarrow{\sim} \mathrm{Pic}(\CO_K): 
\ \left[(a,b,c) \right] 
\mapsto \left[ \big \la a,b/2 + \sqrt{\a} \big \ra \right]. $$ 
\end{theorem}

\mk 

In Section \ref{improper classification} we show that the above coarser 
classification $\cl(\Dl)$ (which is not necessarily a group),   
is bijective to: $\cl_1(\Dl)/([q] \sim [q^\op])$,  
where for $q=(a,b,c)$, $q^\op = (a,-b,c)$. 
Similarly: 
$\cl'(\Dl_K) = \cl_1'(\Dl_K)/([q] \sim [q^\op])$. 

\mk 

For any prime $\fp$ of $k$ let $\hCO_\fp$ 
be the completion of $\CO$ with respect to 
the discrete valuation induced by $\fp$.  
The \emph{principal genus} $\text{Cl}_\iy(q)$ of $q$ 
is the set of classes of $\CO$-forms that are properly 
$K$- and $\hCO_\fp$-isomorphic to $q$ for any $\fp$. 
In Section \ref{class set}, after dividing the 
classes in $\cl_1(\disc(q))$ into genera, 
we prove in Corollary \ref{exponent 2} 
another result analogous to a one of Gauss in characteristic $0$: 
denote by $\star$ the operation in the group $\cl_0(\disc(q))$. 
Then for any $[q'] \in \cl_0(\disc(q))$ 
one has: $[q' \star q'] \in \text{Cl}_\iy(q)$. 
In Section~\ref{elliptic} an application 
towards elliptic curves is demonstrated.

\bigskip

\section{Torsors of norm forms} \label{torsors}
From now on $R$ is a domain in which $2$ is invertible, with fraction field $k$. 
Schemes defined over $\Sp R$ are underlined,   
omitting the underline for the generic fiber over $k$. 
Given a binary primitive quadratic $R$-form 
$q:M \to R$, the \emph{orthogonal group} of $(M,q)$ 
is the affine $R$-group of its 
self isometries (\cite[p.8]{Knus}): 
$$ 
\Oq := \{ H \in \un{\mathrm{GL}}(M) : q \circ H = q \}. 
$$ 
As $2$ is a unit in $R$, the \emph{special orthogonal subgroup} $\SO$ is 
$\ker[\Oq \xrightarrow{\det} \un{\BG}_m]$ (\cite[\S 1,p.1]{Con1}), 
and since $\CO$ is an integral domain, $\det$ factors through the group 
$\un{\mu}_2 = \Sp R[t]/(t^2-1)$ (\cite[Lemme~4.3.0.21]{CF}), thus we may just write 
$\SO := \ker[\Oq \xrightarrowdbl{\det} \un{\mu}_2]$.     

\mk 

Let $A$ be a quadratic $R$-algebra. 
If $A$ has a basis $\Om$ over $R$, 
inducing a representation 
$\p_\Om:\un{\mathrm{Aut}}(A) \hookrightarrow \un{\mathrm{GL}}_2$,  
then the above norm $n_A$ coincides with $\det \circ \p_{\Om}$ (\cite[Def. 2.3]{Biesel}).  
In particular, let $\Om := \{1,\sqrt{\a}\}$ 
where $\a \neq 0$ is a nonsquare element of $R$.   
The quadratic $R$-algebra 
$A_\a := R \la \Om \ra = R \oplus \sqrt{\a} R$  
is closed under multiplication and contains $R$, 
thus carries a ring structure. 
The Weil restriction of scalars 
$\un{\mathrm{R}} := \mathrm{Res}_{A_\a/R}(\un{\BG}_m)$,  
is a two-dimensional $R$-group whose generic fiber is a $k$-torus.   
The group of points $\un{\mathrm{R}}(R)$, 
via its isomorphism with $A_\a^\times$ \cite[\S 7.6]{BLR}, 
naturally acts on $A_\a$ through its basis $\Om$, 
yielding a canonical embedding 
of $\un{\mathrm{R}}$ in $\un{\text{Aut}}(A_\a)$.  
We get a commutative diagram: 
$$
\xymatrix{                                
  \un{\mathrm{R}} \ar@{^{(}->}[r] \ar@{->>}[dr]^{n_\a} 
& \un{\mathrm{Aut}}(A_\a)  \ar@{^{(}->}[r]^-{\p_{\Om}} 
& \un{\mathrm{GL}}_2 \ar@{->>}[dl]_{\det}  \\ 
&     \un{\BG}_m  										
}.$$   

Consider the $R$-group 
$\un{\mathrm{N}}:=\ker[\un{\mathrm{R}} \xrightarrow{n_\a} \un{\BG}_m]$. 
Its generic fiber $\mathrm{N}$ is a one-dimensional $k$-torus.  
At any prime $\fp$ the map applied to the reductions 
$\ov{R}_\fp \xrightarrow{(n_\a)_\fp} (\ov{\BG}_m)_\fp$ 
cannot be trivial as $\ov{R}_\fp$ is two-dimensional, 
thus the local norm $(n_\a)_\fp$ is surjective, 
hence $n_\a$ as well.  
This surjectivity holds even at a ramified prime 
$\fp|\la \det(B_q)\ra$ in which  
$\un{\mathrm{R}}_\fp = \Sp \hCO_\fp[x,y,t]/(t(x^2-\a y^2) - 1)$ 
is not reductive. 

\begin{lem} \label{N is flat}
The scheme $\N$ is flat over $\Sp R$. 
\end{lem}

\begin{proof} 
The schemes in $\un{\mathrm{R}} \xrightarrow{n_\a} \un{\BG}_m$ 
are smooth (e.g., \cite[Cor. A.5.4]{CGP}), 
hence regular and Cohen-Macaulay, thus it suffices to check 
that all the geometric fibers of $\N=\ker(n_\a)$ are one-dimensional. 
This is guaranteed by the surjectivity of~$n_\a$.  
\end{proof}

The quadratic module $(A_\a,q_\a)$ where 
$$
\tilde{q}_\a(X,Y) = q_\a(X+\sqrt{\a}Y) := n_\a(X+\sqrt{\a}Y) = X^2-\a Y^2
$$ 
i.e., $q_\a=(1,0,-\a)$ is of type $A_\a$  
(\cite[III, p. 164]{Knus}). 
From now on, just $n$ and $q$ will stand for $n_\a$ 
and $q_\a$, respectively.

\begin{remark} \label{unique closed and flat subgroup}
As $\det(B_q)$ is squarefree and $2$ is invertible, 
according to \cite[Prop.~2.3.]{APS} 
$\Oq$ and $\SO$ are smooth (thus flat).  
Then by the correspondence between flat closed 
subschemes of $\Oq$ and closed subschemes 
of the generic fiber 
$\mathrm{O}_q$ \cite[Prop.~2.8.1]{EGAIV},   
$\SO$ is the unique flat and closed subgroup of $\Oq$ 
whose generic fiber is $\mathrm{SO}_q$. 
\end{remark}

\begin{lem} \label{N = O+}
$\SO = \N$. 
\end{lem}

\begin{proof} 
Recall that $\N \subset \un{\mathrm{SL}}(A_\a)$. Then: 
\begin{align*} 
\un{\mathrm{SO}}_{q} 
    &=         \{a \in \un{\mathrm{SL}}(A_\a):q\circ a=q\} \\ \nonumber 
    &\supseteq \{a \in \N:                 q\circ a=q\} \\ \nonumber     
    &= \{a \in \N:q(xa)=q(x)\cdot n(a)=q(x) \ \forall x \in A_\a \} \\ \nonumber  
    &= \{a \in \N: n(a)=1 \} = \N.     
\end{align*} 
Both groups $\SO$ and $\N$ are $\CO$-flat 
(see Remark \ref{unique closed and flat subgroup} 
and Lemma \ref{N is flat}) 
closed subgroups of $\un{\mathrm{O}}_{q}$. 
Thus, the injection holds for the generic fibers as well. 
This injection is an equality as both groups 
$\mathrm{SO}_q$ and $\mathrm{N}$ 
are one-dimensional $k$-tori, 
implying by Remark~\ref{unique closed and flat subgroup}, 
that $\un{\mathrm{SO}}_{q} = \N$. 
\end{proof}

Given an affine $R$-group scheme $\un{G}$, 
a \emph{$\un{G}$-torsor in the \'etale topology} 
is a sheaf of sets on $R$ 
equipped with a (right) $\un{G}$-action, 
which is locally trivial in the \'etale topology.  
The pointed set $H^1_\et(R,\un{G})$ classifies 
these $\un{G}$-torsors up to $R$-isomorphisms. 
The following correspondence is due 
to Giraud (see \cite[\S 2.2.4]{CF}): 

\begin{prop} \label{torsors correspondence}
Let $S$ be a scheme and $X_0$ an object of 
a fibered category of schemes defined over~$S$. 
Let $\mathrm{Aut}(X_0)$ be its $S$-group of automorphisms. 
Let $\mathfrak{Forms}(X_0)$ be the category of $S$-forms that 
are locally isomorphic for some topology to $X_0$,  
and let $\mathfrak{Tors}(\mathrm{Aut}(X_0))$ be the category of $\mathrm{Aut}(X_0)$-torsors in that topology. 
$$ \mathfrak{Forms}(X_0) \to \mathfrak{Tors}(\mathrm{Aut}(X_0)): \ X \mapsto \mathrm{Iso}(X_0,X) $$
is an equivalence of fibered categories. 
\end{prop}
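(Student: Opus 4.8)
The plan is to prove the equivalence by exhibiting an explicit quasi-inverse to $\p$ via twisting (contracted products), working throughout with the fixed Grothendieck topology $\tau$ in which ``locally isomorphic'' is meant (for us, the fppf topology on $S$-schemes). The one structural fact I would take for granted is that morphisms in the fibered category satisfy descent, so that for $S$-forms $X,X'$ of $X_0$ the presheaf $T\mapsto\mathrm{Iso}_T(X'\times_S T,\,X\times_S T)$ is a $\tau$-sheaf; this both makes $\textbf{Iso}_{X_0,X}$ a genuine sheaf and lets me verify equalities of morphisms $\tau$-locally.

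First I would check that $\p$ is a well-defined functor into torsors. The sheaf $\textbf{Iso}_{X_0,X}$ carries the right action of $\textbf{Aut}_{X_0}$ by precomposition; over any member $T$ of a cover trivialising $X$ (i.e.\ on which $X_0\times_S T\cong X\times_S T$) this action is identified with the simply transitive translation action of $\textbf{Aut}_{X_0}|_T$ on itself, so $\textbf{Iso}_{X_0,X}$ is an $\textbf{Aut}_{X_0}$-torsor; a morphism of forms $u\colon X\to X'$ gives the equivariant map $u\circ(-)$, which is a morphism of torsors, and functoriality is clear. Next I would build the quasi-inverse $\psi$ sending a torsor $P$ to the contracted product $P\wedge^{\textbf{Aut}_{X_0}}X_0$ — the $\tau$-quotient of $P\times_S X_0$ by $\sigma\cdot(p,x)=(p\sigma^{-1},\sigma x)$, with $\textbf{Aut}_{X_0}$ acting tautologically on $X_0$. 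Over a cover trivialising $P$ one recovers $X_0$, so $\psi(P)$ is at least a $\tau$-form of $X_0$ as a sheaf, and morphisms of torsors clearly induce morphisms of the twists.

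Finally I would produce the two natural isomorphisms: the evaluation map $\textbf{Iso}_{X_0,X}\wedge^{\textbf{Aut}_{X_0}}X_0\to X$, $(f,x)\mapsto f(x)$, and $P\to\textbf{Iso}_{X_0,\,\psi(P)}$, $p\mapsto(x\mapsto[p,x])$; each is well defined on the contracted product and equivariant, restricts to the identity on a trivialising cover, hence is an isomorphism, with naturality read off the formulas. The main obstacle is the one genuinely nontrivial point: showing that $\psi(P)$ is an \emph{object of the fibered category of schemes}, not merely a sheaf. This is where I must invoke effectivity of $\tau$-descent — equivalently, that the fibered category is a $\tau$-stack — so that the descent datum on $X_0$ along the trivialising cover, twisted by the cocycle of $P$, is effective; in the cases used in this paper the relevant objects are affine (or quasi-affine) $S$-schemes, for which fppf descent is effective. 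The remaining bookkeeping (equivariance of the comparison maps and the $2$-functorial coherence of $\p$ and $\psi$) is routine.
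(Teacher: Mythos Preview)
Your argument is correct and is the standard proof of this classical result: construct the quasi-inverse by the contracted product (twisting) and verify the two natural isomorphisms, invoking effectivity of descent to ensure the twist lies in the fibered category. Note, however, that the paper does not supply its own proof of this proposition; it merely attributes the statement to Giraud and cites \cite[\S 2.2.4]{CF}, so there is no in-paper proof to compare against---your write-up is essentially the argument one finds in the cited literature.
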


In particular the category of torsors of 
$\Oq = \un{\mathrm{Aut}}(q)$ in the \'etale topology 
is equivalent to the one of $R$-schemes 
of the form $\mathrm{Iso}(q,q')$ 
where $q'$ is a quadratic $R$-form 
\'etale-equivalent to $q$.  
Its associated \emph{discriminant algebra} 
is $D(q') = (\wedge^2 M',\det(q'))$ 
and its isomorphism class in $H^1_\et(R,\un{\mu}_2)$ is its \emph{Arf invariant}.  

Being diagonal, $q=(1,0,-\a)$ admits the improper isometry $\diag(1,-1)$ defined over $R$, 
so $\Oq(R) \xrightarrow{\det} \un{\mu}_2(R)$ is surjective.  
Then \'etale cohomology applied to the exact sequence of smooth $R$-groups: 
\begin{equation} \label{det sequence}
1 \to \SO \to \Oq \xrightarrow{\det} \un{\mu}_2 \to 1,  
\end{equation}
gives rise to the exact sequence of pointed-sets: 
\begin{equation} \label{LES}
1 \to H^1_\et(R,\SO) \to H^1_\et(R,\Oq) \xrightarrow{\det_*} H^1_\et(R,\un{\mu}_2),   
\end{equation} 
where $\det_*([\text{Iso}(q,q')]) = [D(q')]-[D(q)]$ in $H^1_\et(R,\un{\mu}_2)$ 
(preserving the base point, \cite[IV, Prop. 4.3.4]{Gir}). 
By the exactness of \eqref{LES}, $\text{Iso}(q,q')$ 
represents a class in $H^1_\et(R,\SO)=\ker(\det_*)$ 
if and only if $q'$ is equivalent to $q$ 
in the \'etale topology and shares the Arf invariant of $q$.

\begin{lem} \label{H1 equals cl}
The map $\psi: H^1_\et(R,\SO) \cong \cl_0(\disc(q)): [\mathrm{Iso}(q,q')] \mapsto [q']$ 
is a bijection of pointed sets having a structure of an abelian group. 
\end{lem}

\begin{proof} 
A representative $\text{Iso}(q,q')$ in $H^1_\et(R,\SO)$ 
satisfies $[D(q')]=[D(q)]$ in $H^1_\et(R,\un{\mu}_2)$.  
The latter is equivalent to 
$\det(B_{q'}) = a^2\det(B_q)$ for some 
$a \in \BF^\times$ (\cite[III, \S 3.3]{Knus}), 
i.e., to $\disc(q')=\disc(q)$. 
Identifying these sets of representatives modulo 
proper isometries by $\psi$, $\cl_0(\disc(q))$ is identified 
with $H^1_\et(R,\SO)$ and inherits its structure of the abelian group, 
as $\SO=\N$ (Lemma \ref{N = O+}) is commutative.  
\end{proof}

\begin{example} \label{improper isomorphism} 
Let $q=(1,0,x)$ be a quadratic form defined over $R=\BF_5[x]$.  
Then $q'=(2,0,x)$ cannot represent a class in $H^1_\et(R,\SO)$ 
as $\disc(q) \neq \disc(q')$, though representing a class in $H^1_\et(R,\Oq)$; 
fixing $a$ such that $a^2=2$, $H=\diag(a,1)$ 
is an isometry $q \to q'$ defined over an \'etale cover of $R$. 
The form $q''=(4,0,x)$, however, being $R$-equivalent to $q$ by $H=\diag(2,1)$,  
represents a class in $H^1_\et(R,\SO)$, but not the one of $q$ as $H$ cannot be proper. 
\end{example}

Given a set $\Om'=\{\om_1,\om_2\}$, 
set the vector $\ov{\Om'} = 
\begin{pmatrix}
 \om_1 \\
 \om_2 \\ 
 \end{pmatrix}$ 
and notice that $q'=q \circ H$ can be written as  
$\tilde{q}'(X,Y) = \tilde{q}((X,Y) \cdot H)$, 
or, equivalently, as  
$$ 
q'((X,Y) \cdot \ov{\Om}) = q((X,Y) \cdot \ov{\Om'})  
$$
where $\ov{\Om'}=H \cdot \ov{\Om}$. 
We denote $\Om'$ briefly by $H \Om$.    

\begin{lem} \label{short LES}
The map 
$\wt{i}_*([q \circ H]) = [R\la H\Om\ra]$
forms an exact sequence of pointed-sets: 
\begin{equation*} 
1 \to R^\times / n(A_\a^\times) 
\to \cl_1(-\a (R^\times)^2) \xrightarrow{\wt{i}_*} 
\mathrm{Pic}(A_\a) \to 1.   
\end{equation*} 
\end{lem}

\begin{proof} 
Applying \'etale cohomology to the short exact sequence of smooth $R$-groups: 
\begin{equation} \label{SES} 
1 \to \N \xrightarrow{i} \un{\mathrm{R}} \xrightarrow{n} \un{\BG}_m \to 1,   
\end{equation}
being commutative, gives rise 
to a short exact sequence of abelian groups:
\begin{equation} \label{LES Gm} 
1 \to R^\times / n(A_\a^\times) \xrightarrow{\dl} H^1_\et(R,\N) \xrightarrow{i_*} H^1_\et(R,\un{\mathrm{R}}),  
\end{equation} 
in which $H^1_\et(R,\un{\mathrm{R}})$ 
is isomorphic as an abelian group by Shapiro's Lemma to 
$H^1_\et(A_\a,\un{\BG}_m)\cong \Pic(A_\a)$.  
A representative $\Iso(q,q \circ H)$ in $H^1_\et(R,\N=~\SO)$ 
(Lemma \ref{N = O+}) corresponds to the quadratic $R$-module 
$(R\la H\Om\ra,q \circ H)$ with trivial Arf-invariant. 
Its class is mapped by $i_*$ to $[\text{Iso}(A_\a,R \la H\Om \ra)]$  
in $H^1_\et(A_\a,\BG_m=\text{Aut}(A_\a))$ 
(\cite[V, 3.1.1.1]{Gir}), or, equivalently, 
to $[R\la H\Om\ra]$ in $\Pic(A_\a)$ 
(forgetting the quadratic form, 
see the proof of \cite[Thm. 3]{Kne}).  
By Lemma \ref{H1 equals cl} $i_*$ can be replaced by 
$i_*([q \circ H]) = [R\la H \Om \ra]$ in: 
\begin{equation}  
1 \to R^\times / n(A_\a^\times) 
\xrightarrow{\psi \circ \dl} 
\cl_0(-\a(R^\times)^2) \xrightarrow{i_*} \Pic(A_\a).   
\end{equation} 
The extension $\wt{i}_*$ of $i_*$ to $\cl_1(-\a(R^\times)^2)$, 
including all quadratic \emph{maps} of 
the discriminant $-\a(R^\times)^2$, is surjective;   
any representative $P$ in $\Pic(A_\a)$  
corresponds up to isomorphism to a primitive 
quadratic $R$-map $q_P$ such that the quadratic 
module $(P,q_P)$ is of type~$A_\a$ 
(see Section \ref{intro}).  
This $q_P$ which can be taken to be the norm $n_P$ 
induced by the standard involution on $P$ (\cite[III, Prop. 7.3.1]{Knus}),  
is locally everywhere equivalent to $n$ (\cite[III, Remark 7.3.4]{Knus}), 
implying that: 
\begin{equation} 
\exists a \in \bigcap\limits_{\fp \in \Sp R} \hCO_\fp^\times = R^\times: 
\det(B_{n_P}) = a^2 \det(B_n),  
\end{equation} 
thus $\disc(n_P)=\disc(n)$ and so $[n_P] \in \cl_1(-\a(R^\times)^2)$.  
This amounts in the asserted exact sequence. 
\end{proof}

\begin{remark} \label{abelian group structure}
Each fiber of the surjection $i_*:[(P,q_P)] \to [P]$, 
forgetting as above the quadratic map $q_P$, 
bijects with the kernel $R^\times / n(A_\a^\times)$,  
thus $\cl_1(-\a(R^\times)^2)$ can be endowed a-priori 
with the structure of an abelian group, 
being $R^\times / n(A_\a^\times) \times \Pic(A_\a)$.   
\end{remark}

\bk

\section{Dedekind correspondence: the geometric case} \label{Gauss section}
Let $\BF(x)$ be the field of rational functions over a finite field $\BF$ of odd characteristic.  
Any finite non-scalar extension $k$ of $\BF(x)$ can be viewed as the function field 
of some projective, smooth and geometrically connected $\BF$-curve $C$. 
Each closed point $\fp$ on $C$ gives rise to a discrete valuation on $k$. 
Let $\hCO_\fp$ be the ring of integers in the completion $\hat{k}_\fp$ of $k$ 
with respect to $v_\fp$. 
Throughout, $k$ is assumed \emph{imaginary}, namely,  
the prime $\iy = \la 1/x \ra$ in $\BF(x)$ 
does not split into distinct places in $k$ (cf. \cite{LM}).  
Let $\iy_k$ be the unique prime of $k$ lying 
above $\iy$, regarded as a closed point on $C$.  
Then the ring of regular functions on the affine curve 
$C^\af := C- \{\iy_k\}$ is a Dedekind domain:  
$$ 
\CO := \BF[C^\af] = \{x \in k: v_\fp(x) \geq 0 \ \forall \fp \neq \iy_k \}. 
$$

This leads to the geometric analogue of the Dedekind correspondence: 

\begin{theorem}  \label{Gauss} 
Given a finite field $\BF$ of odd characteristic, 
let $k/\BF(x)$ be an imaginary extension 
and $\CO$ the domain of $k$-elements regular
everywhere away of the prime $\iy_k$ lying over $\iy=\la 1/x \ra$.    
Let $K/k$ be a geometric imaginary quadratic extension 
and $\CO_K$ the domain of $K$-elements  
regular everywhere away of the prime $\iy_K|\iy_k$, 
with discriminant $\Dl_K = -\a (\BF^\times)^2$, $\a \in \CO$. 
There is an isomorphism of abelian groups:    
$$ \wt{i}_*:\cl_1'(\Dl_K) \xrightarrow{\sim} \mathrm{Pic}(\CO_K): 
\ \left[(a,b,c) \right] 
\mapsto \left[ \big \la a,b/2 + \sqrt{\a} \big \ra \right]. $$ 
\end{theorem}

\begin{proof} 
First we see that $\CO_K = A_\a$ for some (non-scalar) $\a \in \CO$.    
As a (maximal) order over $\CO$, $\CO_K$ is a free $\CO$-module of rank $2$,  
thus admits a basis $\{1,t\}$ over $\CO$,  
where $t$ is an algebraic integer thus a root of a monic quadratic polynomial over $\CO$,   
i.e., there exist $m,n \in \CO$ s.t. $t^2+mt+n=0$. 
Taking $t$ to be the root $\fc{\sqrt{m^2-4n}}{2}-\fc{m}{2}$, 
we see that $\CO_K = A_\a$ for $\a = m^2-4n$.   

\mk 

Consider the exact sequence in Lemma \ref{short LES} for $R=\CO$:
$$
1 \to \CO^\times / n(A_\a^\times) \xrightarrow{\dl} 
\cl_1(\Dl_K) \xrightarrow{\wt{i}_*} \mathrm{Pic}(A_\a) \to 1.   
$$ 
Notice that $\CO^\times = \BF^\times$, and as 
$K$ is imaginary one has 
$n(A_\a^\times)=(\BF^\times)^2$ 
(\cite[Example~1]{Mor}).  
A representative $\lm$ in $\CO^\times / n(A_\a^\times) \cong \BF^\times/(\BF^\times)^2$ 
is mapped by $\dl$ to $[\lm q] \in \cl_1(\Dl_K)$,   
hence $\cl_1(\Dl_K) / \Im(\dl) \cong \cl_1'(\Dl_K)$ 
(recall by Remark \ref{abelian group structure} 
that $\cl_1(\Dl_K)$ has a group structure). 

\mk 

Explicitly, starting by a general $\CO$-form 
$q_L = (a,b,c)$ of discriminant $\Dl_K=-\a (\BF^\times)^2$ 
($c$ can be taken to be $\fc{b^2/4-\a}{a}$), 
we have $q_L = q \circ H_L$ where 
$H_L = 
\fc{1}{\sqrt{a}} \left( \begin{array}{cc}
     a    &  0   \\ 
     b/2  & 1 \\ 
\end{array}\right)$. 
As $\det(H_L)=1$, $[q_L] \in \cl_1'(\Dl_K)$. 
By Lemma~\ref{short LES}:    
$$ \wt{i}_*([q_L]) = 
[\CO \la H_L \Om = \fc{1}{\sqrt{a}} \{a,b/2+\sqrt{\a}\} \ra] = 
[\la a,b/2+\sqrt{\a} \ra]$$ 
in $\Pic(\CO_K)$   
(the two ideals differ by tensoring with a principal one). 
\end{proof}

\begin{remark} \label{exact sequence over Z}
The exact sequence \eqref{LES Gm} was obtained 
similarly in \cite[(2.3)]{BS} for $R=\Z$, 
only for the flat cohomology, as $\N$ may not be smooth 
at $2$ which is not a unit in $\Z$, 
and the map $i_*$ is surjective there, 
as the following right term $\Pic(R)$ is trivial 
for $R=\Z$. In both cases $\ker(\dl)$ is $R^\times/n(A_\a^\times)$, 
but its non-trivial coset differs: while for $R=\CO$ it is represented by 
$\lm \in \BF^\times-(\BF^\times)^2$, for $R=\Z$ 
it is $-1$ (for $K=\Q(\sqrt{d}), d<0$). 
This explains in what sense the identification of two definite forms 
in our $\cl_1'(\Dl_K)$ in Def.~\ref{cl1},  
is analogous to the one over $\Z$. 
\end{remark}

\begin{remark} \label{opposite}
Given a form $q=(a,b,c)$, we call $q^\op = (a,-b,c)$ its \emph{opposite form}.   
By Theorem \ref{Gauss} the tensor product in $\Pic(\CO_K)$ 
induces by a group operation in $\cl_1'(\disc(q))$: 
$$ 
[q_{L_1}] \star [q_{L_2}] = \wt{i}_*^{-1}([L_1 \otimes L_2]),  
$$ 
and $[q]^{-1} = [q^\op]$. 
Indeed, let $L^\op$ be the ideal corresponding to $q^\op$. Then: 
\begin{align*} 
I = L \otimes L^\op &= \la a,\sqrt{a}+b/2 \ra \otimes \la a,\sqrt{\a}-b/2 \ra \\ \nonumber 
&= \la a^2,a(\sqrt{\a}+b/2),a(\sqrt{\a}-b/2),b^2/4-\a \ra. 
\end{align*} 
But $b^2/4-\a=ac$, thus $I \subseteq \la a \ra$. 
On the other hand, both $L$ and $L^\op$ are primitive,  
thus $\la a \ra \subseteq I$, whence $I = \la a \ra$ is principal. 
\end{remark}

\bk

\section{Not necessarily proper classification} \label{improper classification}
In this section we study a less narrow classification of all primitive 
binary quadratic $\CO$-maps of a common given discriminant, namely, 
up to \emph{proper and improper} $\CO$-isometries   
$$ \cl(\Dl) := \{ q:\disc(q) = \Dl\} / \mathrm{GL}_2(\CO). $$

Given a smooth $\CO$-group $\un{G}$ and a representative $P$ in $H^1_\et(\CO,\un{G})$, 
the quotient of $P \times_{\CO} \un{G}$ 
by the $\un{G}$-action, $(p,g) \mapsto (ps^{-1},sgs^{-1})$,  
is an affine $\CO$-group scheme ${^P}\un{G}$, 
being an inner form of $\un{G}$, 
called the \emph{twist} of $\un{G}$ by $P$ 
(e.g., \cite[\S 2.2]{Sko}). 

\begin{lem} \label{relation}
$\cl(\Dl) \cong \cl_1(\Dl)/([q] \sim [q^\op])$. 
\end{lem}

\begin{proof}
As $B_q=\diag(1,-\a)$, the short exact sequence of smooth $\CO$-groups \eqref{det sequence}:
\begin{equation*} 
1 \to \SO \to \Oq \xrightarrow{\det} \un{\mu}_2 \to 1
\end{equation*}
splits by the section mapping the non-trivial element in $\un{\mu}_2$ to $\diag(1,-1)$ in $\Oq$,  
i.e., $\Oq$ is isomorphic to $\SO \rtimes \un{\mu}_2$. 
Then according to \cite[Lemma~2.6.3]{Gil} we get:  
\begin{equation} \label{decomposition}
H^1_\et(\CO,\Oq) = \coprod_{[P] \in H^1_\et(\CO,\un{\mu}_2)} H^1_\et(\CO,^{P}\SO) / \un{\mu}_2(\CO), 
\end{equation}
in which $\un{\mu}_2(\CO)$ acts on the set of representatives of $H^1_\et(\CO,^{P}\SO)$ by $\diag(1,\pm 1)$. 
If $P$ is a trivial $\un{\mu}_2$-torsor, 
then $H^1_\et(\CO,^{P}\SO = \SO) \cong \cl_0(\Dl)$ by Lemma \ref{H1 equals cl}. 
Otherwise, if $P$ represents a non-trivial $\un{\mu}_2$-torsor, 
then the corresponding twisted form has a distinct Arf-invariant than the one of $q$, 
which thereby does not belong to $H^1_\et(\CO,\SO)$ 
(see sequence \eqref{LES}).  
Consequently, $\cl(\Dl)$ is identified with 
the first component in the decomposition \eqref{decomposition}, 
thus to $\cl_0(\Dl)$ extended to $\cl_1(\Dl)$ 
modulo $\un{\mu}_2(\CO)$, such that any quadratic map 
is identified with its opposite by $\diag(1,-1)$. 
\end{proof}

\mk

Applying the quotient in Lemma \ref{relation} 
to $\cl_1'(\Dl_K)$ gives by Theorem \ref{Gauss}:
 
\begin{cor}
Let $K/k$ be a geometric quadratic extension of imaginary fields.    
There is a bijection of pointed sets (compare with \cite[Remark~5.20]{BS}): 
$$ \cl'(\Dl_K) \cong \mathrm{Pic}(\CO_K) / (x \sim x^{-1}); 
\ \left[(a,b,c) \right] \mapsto \left[\big \la a,\sqrt{\a}-b/2 \big \ra \right] $$ 
thus $\cl'(\Dl_K)$ remains a group if and only if 
$\mathrm{exp}(\mathrm{Pic}(\CO_K)) \leq 2$.  
\end{cor}

\mk

\section{Genera and the principal genus} \label{class set}
Let $\un{G}$ be an affine $\CO$-group scheme with generic fiber $G$.  
The group of $k$-points $G(k)$ is embedded diagonally in the adelic group 
$\un{G}(\A)$, in which 
$\un{G}(\A_\iy):=G(k_\iy) \times \prod_{\fp \neq \iy_k} \un{G}(\CO_\fp)$ 
is also a subgroup. 
The \emph{class set} of $\un{G}$ 
is the finite set of double cosets (\cite[Prop.~3.9]{BP}):   
$$
\mathrm{Cl}_\iy(\un{G}) := \un{G}(\A_\iy) \backslash \un{G}(\A) / G(k).
$$    

Given furthermore, that $\un{G}$ is of finite type 
and smooth (not necessarily connected), 
it suits by Y.~Nisnevich \cite[Theorem~I.3.5]{Nis} 
into an exact sequence of pointed sets
\begin{equation} \label{Nis sequence}
1 \to \text{Cl}_\iy(\un{G}) \to H^1_\et(\CO,\un{G}) \xrightarrow{\vp} H^1(k,G) \times \prod_{\fp \neq \iy_k} \ H^1_\et(\hat{\CO}_\fp,\un{G}_\fp).    
\end{equation}

Let $[\xi_0] := \vp([\un{G}])$. 
The \emph{principal genus} of $\un{G}$ is then $\vp^{-1}([\xi_0])$, 
i.e., the set of classes of $\un{G}$-torsors that are generically and locally trivial at all primes of $\CO$. 
More generally, a \emph{genus} of $\un{G}$ is any fiber $\vp^{-1}([\xi])$ where $[\xi] \in \Im(\vp)$.  
The \emph{set of genera} of $\un{G}$ is then: 
$$ \text{gen}(\un{G}) := \{ \vp^{-1}([\xi]) \ : \ [\xi] \in \Im(\vp) \}, $$ 
whence $H^1_\et(\CO,\un{G})$ is a disjoint union of its genera. 
The left exactness of sequence \eqref{Nis sequence} reflects the fact that $\text{Cl}_\iy(\un{G})$ 
coincides with the principal genus of $\un{G}$.  
If there is an embedding   
\begin{equation} \label{local embedding} 
\forall \fp \neq \iy_k: 
H^1_\et(\hat{\CO}_\fp,\un{G}_\fp) \hookrightarrow H^1(\hat{k}_\fp,G_\fp) 
\end{equation} 
then as in~\cite[Cor.~I.3.6]{Nis}, the sequence~\eqref{Nis sequence} simplifies to
\begin{equation} \label{Nis short}
1 \to \text{Cl}_\iy(\un{G}) \to H^1_\et(\CO,\un{G}) \to H^1(k,G),  
\end{equation}
which indicates that any $\un{G}$-torsor 
belongs to the principal genus of $\un{G}$ if and only if it is $k$-isomorphic to it. 
More precisely, there is an exact sequence of pointed sets (cf.~\cite[Cor. A.8]{GP}) 
\begin{equation} \label{Nis short right exact}  
1 \to \mathrm{Cl}_\iy(\un{G}) \to H^1_\et(\CO,\un{G}) \to B \to 1,   
\end{equation}
in which 
$$ 
B = \left\{ [\g] \in H^1(k,G):\forall \fp \neq \iy_k,[\g \otimes \hat{\CO}_\fp] \in \Im \left(H^1_\et(\hat{\CO}_\fp,\un{G}_\fp) \to H^1(\hat{k}_\fp,G_\fp) \right) \right\}. 
$$

Let $K/k$ be a finite Galois extension, $\fp$ be a prime of $k$ and $\fP$ 
be a prime of $K$ dividing $\fp$. 
Write $\hat{k}_\fp$ and $\hat{K}_\fP$ for the completions of $k$ at $\fp$ 
and of $K$ at $\fP$, respectively, noting that $\hat{K}_\fP$ is independent 
of the choice of $\fP$ up to isomorphism. 
The norm map $\Nr : K \to k$ extends the above norm 
$n: \CO_K \to \CO$ and induces local maps $\mathrm{Nr}: K \otimes_k \hat{k}_\fp \to \hat{k}_\fp$; 
under the isomorphism above this corresponds to the product 
of the norm maps $\mathrm{Nr}_{K_\fP / k_\fp}$ on the components. 
Similarly, $\CO_K \otimes_{\CO} \hCO_\fp \simeq \CO_{\hat{K}_\fP}^r$. 
Write $U_\fp$ and $U_\fP$ for $\hat{\CO}_{\fp}^\times$ and $\CO_{\hat{K}_\fP}^\times$, respectively. 
The short exact sequence of smooth $\hCO_\fp$-groups (see Section \ref{torsors})
$$ 1 \to \N_\fp \to \un{\mathrm{R}}_\fp \to (\un{\BG}_m)_\fp \to 1 $$
yields by \'etale cohomology the exact and functorial sequence   
$$ 
1 \to \N_\fp(\hat{\CO}_\fp) \to \un{\mathrm{R}}_\fp(\hCO_\fp) \cong U_\fP^r \xrightarrow{\mathrm{Nr}} U_\fp \to H^1_\et(\hCO_\fp,\N_\fp) \to 1, 
$$
since $H^1_\et(\hCO_\fp, \un{\mathrm{R}}_\fp)$ 
is the Picard group of a product of local rings and thus vanishes.  

\mk 

We deduce an isomorphism $H^1_\et(\hCO_\fp,\N_\fp) \cong U_\fp / \mathrm{Nr}(U_\fP^r) = U_\fp / \mathrm{Nr}_{\hat{K}_\fP / \hat{k}_\fp}(U_\fP)$. 
Applying Galois cohomology to the short exact sequence of $\hat{k}_\fp$-groups
$$ 1 \to \mathrm{N}_\fp \to \mathrm{R}_\fp \to (\BG_m)_\fp \to 1 $$
gives rise to the exact sequence of abelian groups
$$ 1 \to \mathrm{N}_\fp(\hat{k}_\fp) \to \un{\mathrm{R}}_\fp(\hat{k}_\fp) \cong (\hat{K}_\fP^\times)^r \xrightarrow{\mathrm{Nr}} \hat{k}_\fp^\times \to H^1(\hat{k}_\fp,\mathrm{N}_\fp) \to 1, $$
where the rightmost term vanishes by Hilbert's Theorem~90.
Hence we may again deduce a functorial isomorphism $H^1(\hat{k}_\fp,\mathrm{N}_\fp) \cong \hat{k}_\fp^\times / \mathrm{Nr}_{\hat{K}_\fP / \hat{k}_\fp}(\hat{K}_\fP^\times)$. 
Note that $U_\fP$ is compact and thus $\mathrm{Nr}_{\hat{K}_\fP / k_\fp}(U_\fP)$ is closed in $k_\fp^\times$. 
Only units have norms that are units, so we obtain an embedding of groups: 
\begin{equation} \label{embedding of local N}
H^1_\et(\CO_\fp,\N_\fp) \cong U_\fp / \mathrm{Nr}_{\hat{K}_\fP / \hat{k}_\fp}(U_\fP) \hookrightarrow \hat{k}_\fp^\times / \mathrm{Nr}_{\hat{K}_\fP / \hat{k}_\fp}(\hat{K}_\fP^\times) \cong H^1(\hat{k}_\fp,\mathrm{N}_\fp). 
\end{equation}

\newpage 

\begin{definition} \label{Sha}
Let $S$ be a non-empty finite set of primes of $k$. 
The \emph{first Tate-Shafarevich set} 
of $G$ over $k$ relative to $S$ is 
$$ \Sh^1_S(k,G) := \ker\left[H^1(k,G) \to \prod_{\fp \notin S} H^1(\hat{k}_\fp,G_\fp) \right]. $$
\end{definition}

\begin{prop} \label{h(N)}
Suppose $[K:k]$ is prime and 
$\Nr(\un{\mathrm{R}}(\hCO_\fp)) = U_\fp \cap \Nr_{\hat{K}_\fP / \hat{k}_\fp}(\hat{K}_\fP^\times)$ for all $\fp$. 
Let $S_r$ be the set of primes dividing $\Delta_k$. 
Then there is an exact sequence of abelian groups (compare with formula (5.3) in \cite{Mor}):  
$$ 
1 \to \mathrm{Cl}_\iy(\N) \to H^1_\et(\CO,\N) 
\to \Sh^1_{S_r \cup \{\iy_k\}}(k,\mathrm{N}) \to 1. 
$$ 
\end{prop}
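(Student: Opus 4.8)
The plan is to derive the asserted sequence as a special case of the Nisnevich sequence \eqref{Nis short right exact}, applied to the flat $\CO$-group $\un{G} = \N_\Om$ with generic fiber $N = N_\Om$. First I would verify that the hypothesis needed to pass from \eqref{Nis sequence} to \eqref{Nis short right exact} is in force: by the local computation \eqref{embedding of local N}, we already have embeddings $H^1_\fl(\hat{\CO}_\fp,\N_\fp) \hookrightarrow H^1(k_\fp,N_\fp)$ at every prime $\fp \neq \iy$, which is exactly condition \eqref{local embedding}. Hence \eqref{Nis short right exact} yields the exact sequence
$$ 1 \to \text{Cl}_\iy(\N_\Om) \to H^1_\fl(\CO,\N_\Om) \to B \to 1, $$
with $B = \{ [\g] \in H^1(k,N) : [\g \otimes \hat{\CO}_\fp] \in \Im(H^1_\fl(\hat{\CO}_\fp,\N_\fp) \to H^1(k_\fp,N_\fp)) \ \forall \fp \neq \iy \}$. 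It then remains to identify $B$ with $\Sh^1_{S_r \cup \{\iy\}}(k,N)$.

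The main work is this identification of $B$, and I expect it to be the central obstacle. Using the functorial isomorphisms established just before the statement, namely $H^1(k_\fp,N_\fp) \cong k_\fp^\times / \mathrm{N}_{K_\fP/k_\fp}(K_\fP^\times)$ and $H^1_\fl(\hat{\CO}_\fp,\N_\fp) \cong \hat{\CO}_\fp^\times / \mathrm{N}_{K_\fP/k_\fp}(U_\fP)$, the image of the local map is (by the hypothesis $\Nr(\un{R}_\Om(\CO_\fp)) = \CO_\fp^\times \cap \Nr_{K_\fP/k_\fp}(K_\fP^\times)$, which guarantees the map is injective with the expected image) the subgroup $\CO_\fp^\times \cdot \mathrm{N}_{K_\fP/k_\fp}(K_\fP^\times) / \mathrm{N}_{K_\fP/k_\fp}(K_\fP^\times)$ of $k_\fp^\times/\mathrm{N}_{K_\fP/k_\fp}(K_\fP^\times)$. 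Now at a prime $\fp$ that is \emph{unramified} in $K$ and $\fp \notin S_r$, local class field theory (together with $[K:k]$ prime, so $K_\fP/k_\fp$ is either split or unramified of prime degree) gives that every unit is a norm: $\CO_\fp^\times \subseteq \mathrm{N}_{K_\fP/k_\fp}(K_\fP^\times)$, so the local image is the \emph{trivial} subgroup, and the condition ``$[\g \otimes \hat{\CO}_\fp]$ lies in the local image'' becomes ``$[\g \otimes \hat{\CO}_\fp]$ is trivial in $H^1(k_\fp,N_\fp)$''. At the ramified primes, i.e. those in $S_r$, and at $\iy$, no condition is imposed. Therefore $B$ consists exactly of those $[\g] \in H^1(k,N)$ that are locally trivial away from $S_r \cup \{\iy\}$, which is precisely $\Sh^1_{S_r \cup \{\iy\}}(k,N)$ by Definition \ref{Sha}.

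Finally I would assemble the pieces: substituting $B = \Sh^1_{S_r \cup \{\iy\}}(k,N)$ into the sequence from \eqref{Nis short right exact} gives the claimed
$$ 1 \to \text{Cl}_\iy(\N_\Om) \to H^1_\fl(\CO,\N_\Om) \to \Sh^1_{S_r \cup \{\iy\}}(k,N) \to 1, $$
and all three terms are abelian groups (being cohomology of commutative group schemes) with the maps homomorphisms, so this is an exact sequence of abelian groups. The subtle points to be careful about are: (i) that the hypothesis on $\Nr(\un{R}_\Om(\CO_\fp))$ is exactly what makes the identification of the local image clean, ensuring no spurious unit classes appear; (ii) that ``$\fp$ divides $\Delta_k$'' is the correct description of ramification in the quadratic-like (prime degree) extension $K/k$, so that $S_r$ is indeed the ramification locus; and (iii) keeping track of the place $\iy$, which is excluded from the integral model $\CO = \BF_q[C - \{\iy\}]$ and hence automatically contributes no local condition, explaining its presence in the index set alongside $S_r$.
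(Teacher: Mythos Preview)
Your overall strategy matches the paper's: invoke the Nisnevich sequence \eqref{Nis short right exact} for $\un{G} = \N_\Om$ using the local embedding \eqref{embedding of local N}, and then identify $B$ by analyzing the image of $H^1_\fl(\hat{\CO}_\fp,\N_\fp) \hookrightarrow H^1(k_\fp,N_\fp)$ prime by prime. The unramified case is handled correctly, and your description of the local image as $\hat{\CO}_\fp^\times \cdot \mathrm{N}_{K_\fP/k_\fp}(K_\fP^\times)/\mathrm{N}_{K_\fP/k_\fp}(K_\fP^\times)$ is the right one.

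There is, however, a gap at the ramified primes. You write that ``at the ramified primes \ldots\ and at $\iy$, no condition is imposed,'' but these two cases are not parallel: at $\iy$ no condition appears because the product defining $B$ runs only over $\fp \neq \iy$, whereas at a ramified prime $\fp \neq \iy$ a condition \emph{is} imposed, namely that $[\g_\fp]$ lie in the image of the local embedding. For this condition to be vacuous you must show that the embedding is \emph{surjective} at such $\fp$, i.e.\ that $\hat{\CO}_\fp^\times \cdot \mathrm{N}_{K_\fP/k_\fp}(K_\fP^\times) = k_\fp^\times$. The paper supplies exactly this step by a cardinality argument: since $[K:k]$ is prime, any ramified place is \emph{totally} ramified, so $[\hat{\CO}_\fp^\times : \hat{\CO}_\fp^\times \cap \mathrm{N}_{K_\fP/k_\fp}(U_\fP)] = n_\fp$ by \cite[Theorem~5.5]{Haz}, while local Artin reciprocity gives $|H^1(k_\fp,N_\fp)| = n_\fp$; hence the injection \eqref{embedding of local N} is a bijection at ramified primes. (Equivalently, total ramification forces the norm map to surject onto valuations, so units together with norms exhaust $k_\fp^\times$.) This is precisely where the primality hypothesis on $[K:k]$ is used beyond the unramified analysis, and it should be made explicit in your argument.
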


\begin{proof} 
As $H^1_\et(\CO_\fp,\N_\fp)$ embeds into $H^1(\hat{k}_\fp,\mathrm{N}_\fp)$ for any prime $\fp$ by~\eqref{embedding of local N}, 
the group $\N$ admits the exact sequence~\eqref{Nis short right exact}, 
consisting of abelian groups as $\N$ is commutative.  
The pointed set $\text{Cl}_\iy(\N)$ is in bijection with the first Nisnevich cohomology set 
$H^1_{\text{Nis}}(\CO,\N)$ 
(cf.~\cite[I.~Theorem~2.8]{Nis}), which is a subgroup of $H^1_\et(\CO,\N)$ 
because any Nisnevich cover is flat. Hence the first map is an embedding. 
Since $K/k$ has prime degree and so is necessarily abelian, 
at any prime $\fp$ the local Artin reciprocity law implies that
$$ n_\fp=|\text{Gal}(\hat{K}_\fP/\hat{k}_\fp)|=[\hat{k}_\fp^\times:\mathrm{Nr}_{\hat{K}_\fP/\hat{k}_\fp}(\hat{K}_\fP^\times)]=|H^1(\hat{k}_\fp,\mathrm{N}_\fp)|. $$
Furthermore, since $[K:k]$ is a prime number, 
any ramified place $\fp$ is totally ramified,   
which implies that $[U_\fp : U_\fp \cap \mathrm{Nr}_{\hat{K}_\fP / \hat{k}_\fp}(U_\fP)] = n_\fp$ \cite[Theorem~5.5]{Haz}. 
Together with~\eqref{embedding of local N} this means that $H^1_\et(\hat{\CO}_\fp,\N_\fp)$ 
coincides with $H^1(\Q_\fp,\mathrm{N}_\fp)$ 
at ramified primes and vanishes elsewhere. 
Thus the set $B$ of~\eqref{Nis short right exact} consists of classes $[\g] \in H^1(k,\mathrm{N})$ 
whose fibers vanish at unramified places.   
This means that $B=\Sh^1_{S_r \cup \{\iy_k\}}(k,\mathrm{N})$, where $S_r$ is the (finite)  
set of ramified primes of $K/k$.
\end{proof}

\begin{remark} \label{B}
The group $B=\Sh^1_{S_r \cup \{\iy_k\}}(k,\mathrm{N})$ 
embeds in $H^1(k,\mathrm{N})$ by definition. 
But $H^1(k,\mathrm{N}) \cong k^\times/ \mathrm{Nr}(K^\times)$, which means that $B$ has an 
exponent dividing $n=[K:k]$.  
\end{remark}

\begin{cor} \label{exponent 2}
For any $[q'] \in \cl_0(\disc(q))$, 
$[q' \star q'] \in \text{Cl}_\iy(q):= \text{Cl}_\iy(\SO)$. 
\end{cor}

\begin{proof}
The scheme $\SO=\N$ admits the 
exact sequence~\eqref{Nis short right exact} 
(see the proof of Prop.~\ref{h(N)}).  
Together with \eqref{Nis short} and Lemma \ref{N = O+} the principal genus satisfies 
\begin{equation*} 
\text{Cl}_\iy(\SO=\N) = \ker[H^1_\et(\CO,\N) \to H^1(k,\mathrm{N})].  
\end{equation*} 
The quotient $H^1_\et(\CO,\N) / \text{Cl}_\iy(\N) = \Sh^1_{S_r \cup \{\iy_k\}}(k,N)$ 
has exponent $2$ by Proposition~\ref{h(N)} and Remark~\ref{B}. 
This means that if $[q'] \in H^1_\et(\CO,\N=\SO)$, 
the latter pointed-set being bijective to $\cl_0(\disc(q))$ 
by Lemma \ref{H1 equals cl}, then $[q' \star q']$ lies in $\text{Cl}_\iy(q)$. 
\end{proof}

\bk

\section{Over elliptic curves} \label{elliptic}
Let $\CO=\BF[x]$ and so $k=\BF(x)$. 
Let $C = \{ Y^2Z = X^3 + aXZ^2 + bZ^3\}$ be an elliptic curve defined over $\BF$.  
Then $K=\BF(C)$ is quadratic imaginary over $k$;  
$K=k(\sqrt{\a})$ where $\a=x^3+ax+b \in \CO$.    
As $\text{char}(k)$ is odd $K/k$ is separable  
and as $\deg(\a)=3$, $\iy_k=\la 1/x \ra$ ramifies in $K$ (\cite[Theorem~1(1)(a)]{DLB}). 
Suppose $\iy_K=(0:1:0)$ belongs to $C(\BF)$. 
Then $\CO_K=\BF[C^\af]$ where $C^\af$ is the affine $\BF$-curve 
$C-\{\iy_K\}=\{y^2=\a \}$ and one has 
$\Pic(\CO_K) \cong C(\BF)$ (e.g., \cite[Example~4.8]{Bit}). 
Let as above $\cl_1'(\Dl_K)$ be the set of classes 
of primitive quadratic binary $\CO$-forms, 
being definite, 
up to proper $\CO$-isometries. 
By Theorem \ref{Gauss} one has: $\cl_1'(\Dl_K) \cong C(\BF)$. 
Explicitly,   

\begin{cor} \label{C via cl}
Let $C = \{ Y^2Z = X^3 + aXZ^2 + bZ^3\}$ 
be an elliptic $\BF$-curve 
such that $\iy_K := (0:1:0) \in C(\BF)$. 
Set: $\Dl_K = -(x^3+ax+b) (\BF^\times)^2$.  
Then there is an isomorphism of abelian groups $C(\BF) \cong \cl_1'(\Dl_K)$ given by: 
\begin{align*}
[(\mA:\B:\C \neq 0)] 
\mapsto \left[\left(x-\fc{\mA}{\C},-\fc{2\B}{\C},\fc{\left(\fc{\B}{\C}\right)^2-\a}{x-\fc{\mA}{\C}}\right)\right],  
[(0:1:0)] \mapsto [(1,0,-\a)].     
\end{align*} 
\end{cor} 

\begin{proof}
Since $\iy_K$ is a closed point on $C$, 
$\BF[C-\{\iy_K\}] = \CO[\sqrt{\a}]$ where $\a=x^3+ax+b$ 
is the ring of $\{\iy_K\}$-integers in $K=\BF(C)$. 
The above correspondence is then given by: 
\begin{align*} 
[(\mA:\B:\C \neq 0)] \in C(\BF) - \{\iy_K\} 
&\mapsto \left[(\mA/\C,\B/\C) \right] \in C^\af(\BF) \\ \nonumber  
&\mapsto \left[\big \la x-\mA/\C,y-\B/\C \big \ra \right] \in \Pic(\CO_K)  \\ \nonumber 
& \mapsto \left[\left(x-\mA/\C,-2\B/\C,\fc{\left(\B/\C\right)^2-\a}{x-\mA/\C}\right) \right] \in \cl_1'(\Dl_K)   
\end{align*}
and $(0:1:0) \mapsto (1,0,-\a)$. 
\end{proof}

\begin{example}
Let $C = \{ Y^2Z = X^3 + XZ^2 + Z^3 \}$ defined over $\BF_3$. 
Removing the rational point $\iy_K = (0:1:0)$, 
we get the affine curve 
$C^\af = \{ y^2 = x^3 + x + 1 \}$ with 
$\CO_K = \BF_3[x,y]/\la y^2 - x^3 - x - 1 \ra$.  \\
Then we have: 
{
\begin{center}
\begin{tabular}{|c | c | c | c| c| c| c|} 
 \hline
  i  &  $C(\BF_3)$  & affine support  & order & $L_i$       &  $q_{i}$            \\ [0.5ex] \hline\hline
  1  &  $(1:0:1)$   & $(1,0)$          & 2     & $(x-1,y)$   &  $(x-1,0,2x^2+2x+1)$  \\ \hline
  2  &  $(0:1:2)$   & $(0,2)$          & 4     & $(x,y-2)$   &  $(x,2,2x^2+2)$       \\ \hline
  3  &  $(0:1:1)$   & $(0,1)$          & 4     & $(x,y-1)$   &  $(x,1,2x^2+2)$       \\ \hline
  4  &  $(0:1:0)$   & $\text{O}$       & 1     & $\CO_K$     &  $(1,0,2x^3+2x+2)$    \\ \hline
\end{tabular}
\end{center}} 
Here $q_2$ and $q_3$ are opposite  
$[q_2] \star [q_3] = [q_4]$. 
Indeed: $\la x,y-2 \ra \otimes \la x,y-1 \ra = \la x \ra$, thus 
$$\cl_1'(\Dl_K) \cong \Pic(\CO_K) \cong \Z/4, $$ 
while $\cl'(\Dl_K) = \{ [q_1],[q_2],[q_4] \}$ 
has no group structure. 

According to Proposition \ref{h(N)} 
there are $2$ genera   
and by Proposition~\ref{exponent 2} the class 
$[q_2^2]$ belongs to the principal genus though not being the trivial one. 
Indeed: $[q_2^2]=[q_1]$ by the group law,  
and as $y^2=(x-1)(x^2+x-1)$, $q_1$ is isomorphic to $q_4$ by 
$\left( 
     1/\sqrt{x-1},  
     \sqrt{x-1} \right)$  
locally at $\fp \neq \la x-1 \ra$, and by 
$\diag\left( \sqrt{x^2+x-1}/y, y/\sqrt{x^2+x-1} \right)$ 
at $\fp = \la x-1 \ra$.  
\end{example}

\begin{example}
Let $C = \{Y^2Z = X^3 + XZ^2 \}$ defined over $\BF_5$.  
Removing the rational point $\iy_K = (0:1:0)$ 
we get the affine curve 
$C^\af = \{y^2 = x^3 + x \}$ with 
$\CO_K = \BF_5[x,y]/ \la y^2-x^3-x \ra$. 
Then: 
\mk 
\begin{center}
 \begin{tabular}{|c | c | c | c| c| c| c|} 
 \hline
 i  &  $C(\BF_5)$  & affine support   &  order  &  $L_i$      &  $q_{i}$         \\ [0.5ex] \hline\hline
 1  &  $(0:0:1)$   & $(0,0)$          &  2      &  $(x,y)$    &  $(x,0,4x^2+4)$    \\ \hline
 2  &  $(1:0:2)$   & $(3,0)$          &  2      &  $(x-3,y)$  &  $(x-3,0,x^2+3x)$  \\ \hline
 3  &  $(1:0:3)$   & $(2,0)$          &  2      &  $(x-2,y)$  &  $(x-2,0,x^2+2x)$  \\ \hline
 4  &  $(0:1:0)$   & $\text{O}$       &  1      &  $\CO_K$    &  $(1,0,4x^3+4x)$    \\ \hline
\end{tabular}
\end{center}
\mk 
Here we observe no forms of order greater than $2$ and so 
$$\cl'(\Dl_K) = \cl_1'(\Dl_K) \cong \Pic(\CO_K) \cong \Z_2^2.$$ 
\end{example}

\mk 

{\bf Acknowledgements:} 
I thank U.~First, P. Gille, B.~Kunyavski\u\i, S.~Scully and 
S. Vladuts for valuable discussions concerning the topics of the present article.  
I also thank the anonymous referee for his 
constructive remarks. 

\mk

\mk 


\begin{thebibliography}{[groups]}
\bibitem{APS} A.\ Auel, R.\ Parimala, V.\ Suresh {\em Quadric surface bundles over surfaces}, Doc. Math., (Extra vol.), 2015, 31--70.

\bibitem{Biesel} O.\ Biesel, {\em A norm functor for quadratic algebras}, Beitr Algebra Geom, 2022. 

\bibitem{Bit} R.\ A.\ Bitan, {\em The Hasse principle for bilinear symmetric forms over a ring of integers of a global function field}, J. Number Theory, {\bf 168}, 2016, 346--359. 

\bibitem{BS} R.\ A.\ Bitan, M.\ M.\ Schein, {\em On the flat cohomology of binary norm forms}, Journal de Th\'eorie des Nombres de Bordeaux {\bf 31,3}, 2019, 527--553. 

\bibitem{BP} A.\ Borel, G.\ Prasad, 
{\em Finiteness theorems for discrete subgroups of bounded covolume in semi-simple groups}, Publ. Math. IHES {\bf 69}, 1989, 119--171.

\bibitem{BLR} S.\ Bosch, W.\ L\"utkebohmert, M.\ Raynaud, 
{\em N\'eron models}, Springer-Verlag, Berlin, 1990. 

\bibitem{Con1} B.\ Conrad, {\em Math 252. Properties of orthogonal groups}, available at \\ http://math.stanford.edu/\textasciitilde conrad/252Page/handouts/O(q).pdf.  

\bibitem{Con2} B.\ Conrad, {\em Non-split Reductive Groups Over Z}, available at \\ http://math.stanford.edu/\textasciitilde conrad/papers/redgpZsmf.pdf

\bibitem{CGP} B.\ Conrad, O.\ Gabber, G.\ Prasad, {\em Pseudo-reductive groups}, volume 26 of New Mathematical Monographs. Cambridge University Press, Cambridge, second edition, 2015.

\bibitem{CF} B.\ Calm\`es, J.\ Fasel, {\em Groupes Classiques}, Panorama et Synth\'eses, 2015. 

\bibitem{FT} A.\ Frohlich, M.\ J.\ Taylor {\em Algebraic Number Theory}, Cambridge Studies in Advanced Mathematics 27. 

\bibitem{Gau} C.\ F.\ Gauss, {\em Disquisitiones Arithmeticae}, 1801. 

\bibitem{GP} P.\ Gille, A.\ Pianzola, {\em Isotriviality and \'etale cohomology of Laurent polynomial rings}, Journal of Pure and Applied Algebra {\bf 212}, 2008, 780--800. 

\bibitem{Gil} P.\ Gille, {\em Sur la classification des sch\'emas en groupes semi-simples}.  In ``Autour des sch\'{e}mas en groupes'' vol. III. 
Panorames et synth\`{e}ses {\bf 47}, 39--110.  Paris, Soci\'{e}t\'{e} Math\'{e}matique de France, 2015. 

\bibitem{Gir} J.\ Giraud, {\em Cohomologie non ab\'elienne}, Grundlehren math. Wiss., Springer-Verlag Berlin Heidelberg New York (1971).

\bibitem{EGAIV} A.\ Grothendieck,  {\em \'El\'ements de g\'eom\'etrie alg\'ebrique (r\'edig\'es avec la collaboration de J. Dieudonn\'e):  
IV. \'Etude locale des sch\'emas et des morphismes de sch\'emas, Seconde partie}, Publ. math. de l'I.H.E.S. {\bf 24}, 1965, 5--231.

\bibitem{Haz} M.\ Hazewinkel, {\em Local class field theory is easy}, Adv. Math. {\bf 18} 1975, 148--181.

\bibitem{Knus} M.\ A.\ Knus, {\em Quadratic and hermitian forms over rings, Grundlehren}, der mat. Wissenschaften {\bf 294}, 1991, Springer. 

\bibitem{Kne} M.\ Kneser, {\em Composition of binary quadratic forms}, Journal Number Theory {\bf 15}, 1982, 406--413.

\bibitem{DLB} D.\ L.\ Brigand, {\em Real quadratic extensions of the rational function field in characteristic two}, S\'emin. Congr. {\bf 11}, 
Soc. Math. France, Paris, 2005, 143--169. 

\bibitem{LM} Y.\ Lee, A.\ M.\ Pacelli, 
{\em Class Groups of Imaginary Function Fields: The Inert Case}, Proceedings of the AMS, {\bf 10}, 2005, 
 2883--2889. 
 
\bibitem{Mor} M.\ Morishita, {\em On S-class number relations of algebraic tori in Galois extensions of global fields}, 
Nagoya Math.\ J. {\bf 124}, 1991, 133--144.

\bibitem{Nis} Y.\ Nisnevich, {\em \'Etale Cohomology and Arithmetic of Semisimple Groups}, PhD thesis, Harvard University, 1982.

\bibitem{Sko} A.\ N.\ Skorobogatov, {\em Torsors and Rational Points}, Cambridge Univ. Press, {\bf 144}, 2001. 

\bibitem{Wood} M.\ M.\ Wood, {\em Gauss composition over an arbitrary base} Adv. Math., 226 {\bf 2}, 2011, 1756--1771. 
\end{thebibliography}
\end{document}